\newtheorem{theorem}{Theorem}[section]
\newtheorem{proposition}[theorem]{Proposition}
\newtheorem{question}[theorem]{Question}
\newtheorem{conjecture}[theorem]{Conjecture}
\newtheorem{lemma}[theorem]{Lemma}
\newcommand{\R}{\mathbb{R}}
\newcommand{\N}{\mathbb{N}}
\newcommand{\K}{{\mathbf K}}
\newcommand{\J}{{\mathbf J}}
\newcommand{\T}{{\mathbf T}}
\renewcommand{\L}{{\mathbf L}}
\newcommand{\LL}{\mathcal{L}}
\newcommand{\F}{\mathcal{F}}
\newcommand{\C}{\mathcal{C}}
\newcommand{\G}{\mathcal{G}}
\newcommand{\A}{\mathcal{A}}
\newcommand{\B}{\mathcal{B}}
\newcommand{\D}{\mathcal{D}}
\newcommand{\X}{\mathcal{X}}
\newcommand{\Y}{\mathcal{Y}}
\newcommand{\Z}{\mathcal{Z}}
\newcommand{\heading}[1]{\medskip\par\noindent{\bf #1}}
\title{A counterexample to Wegner's conjecture on good covers}
\author{Martin Tancer\thanks{Department of Applied Mathematics and Institute for Theoretical Computer
Science (supported by project 1M0545
of The Ministry of Education of the Czech Republic), Faculty of Mathematics
and Physics, Charles University, Malostransk\'e n\'am.~25, 118~00 Prague,
Czech Republic. Partially supported by project GAUK 49209. {\tt Email:tancer@kam.mff.cuni.cz}.
}}
\begin{document}
\maketitle
\begin{abstract}
In 1975 Wegner conjectured that the nerve of every finite good cover in $\R^d$
is $d$-collapsible. We disprove this conjecture.

A good cover is a collection of open sets in $\R^d$ such that the intersection
of every subcollection is either empty or homeomorphic to an open $d$-ball. A
simplicial complex is $d$-collapsible if it can be reduced to an empty complex
by repeatedly removing a face of dimension at most $d-1$ which is contained in a unique maximal face.
\end{abstract}

\section{Introduction}
In 1975~Wegner~\cite{wegner75} introduced $d$-collapsible simplicial complexes.
His definition comes from studying intersection patterns of convex sets. He
proved that simplicial complexes coming from finite collections of convex sets
(as their nerves) are $d$-collapsible. He also conjectured that his result has
a topological extension when collections of convex sets are replaced by good
covers. The purpose of this article is to disprove this conjecture.

We assume that the reader is familiar with simplicial complexes, otherwise we
refer to introductory chapters of books
like~\cite{hatcher01,matousek03,munkres84}. 

\heading{Good covers.} 
A \emph{$d$-cell} is an open subset of $\R^d$ homeomorphic to a ball. A
\emph{good cover} in $\R^d$ is a collection $\G$ of $d$-cells in $\R^d$ such
that for every subcollection $\G' \subseteq \G$ the intersection of the sets in
$\G'$ is either a $d$-cell or empty.\footnote{Different notions of a good cover
appear in the literature. Sometimes the cells are assumed being compact instead
of open. The intersection of $\G'$ can be also assumed contractible instead of
being a $d$-cell. For our purposes there is not a big difference while
constructing a counterexample. It can be easily modified to fulfill the above
mentioned conditions.} We remark that a collection of open convex sets in
$\R^d$ always form a good cover.

\heading{$d$-representable complexes.}
A \emph{nerve} of a collection $\F$ of sets in $\R^d$ is a simplicial complex
whose vertices are the sets in $\F$ and whose faces are collections of vertices
with a nonempty intersection. A finite simplicial complex is \emph{convexly
$d$-representable} if it is isomorphic to a collection of convex sets in
$\R^d$; it is \emph{topologically $d$-representable} if it is isomorphic to a
good cover in $\R^d$. Standard notion appearing in the literature is
\emph{$d$-representable} instead of convexly $d$-representable. However, we add
`convexly' in order to clearly distinguish convex sets and good covers. 

\heading{$d$-collapsible complexes.}
Let $\K$ be a simplicial complex. Assume that $\sigma$ is a face of $\K$ of
dimension at most $d-1$ such that there is only one maximal face of $\K$
containing $\sigma$. Then we say that $\sigma$ is $d$-collapsible and that the
complex
$$
\K' := \K \setminus \{\eta \in \K: \sigma \subseteq \eta\}
$$
arises from $\K$ by an \emph{elementary $d$-collapse}.
A simplicial complex $\K$ is \emph{$d$-collapsible} if it can be reduced to an
empty complex by a sequence of elementary $d$-collapses.

Wegner~\cite{wegner75} proved that every convexly $d$-representable simplicial
complex is $d$-collapsible. He also conjectured that his result has the
following topological extension.

\begin{conjecture}[Wegner, 1975]
\label{c:weg}
Every topologically $d$-representable simplicial complex is $d$-collapsible.
\end{conjecture}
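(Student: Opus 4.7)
The plan is to disprove Conjecture~\ref{c:weg} by producing an explicit counterexample: a finite simplicial complex $\K$ together with a good cover $\G$ in $\R^d$ (for some small fixed $d$) whose nerve is $\K$, and verifying directly that $\K$ admits no sequence of elementary $d$-collapses. By Wegner's positive theorem such a $\K$ must necessarily fail to be convexly $d$-representable, so the cover $\G$ has to exploit the extra flexibility that topological $d$-cells enjoy over convex sets.

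The cleanest certificate for non-$d$-collapsibility is that $\K$ have \emph{no} free face of dimension at most $d-1$ at all --- that is, every face of dimension below $d$ is contained in at least two maximal faces. Then no elementary $d$-collapse can even be initiated, so $\K$ trivially fails to be $d$-collapsible. I would therefore look for a pure $d$-dimensional $\K$ whose $(d-1)$-skeleton satisfies a pseudomanifold-like sharing condition: every codimension-one face lies in at least two maximal faces, and the induced structure propagates the same non-freeness to all lower-dimensional faces.

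To realise such a $\K$ topologically, I would start with a skeletal family of convex cells that already realise most of the required intersection pattern, and then deform or add a small number of non-convex topological balls to create exactly the additional intersections that force the non-freeness. A natural way to keep each new set a $d$-cell and each of its intersections again a $d$-cell is to take small tubular neighbourhoods of carefully chosen arcs that meet the existing convex pieces transversely in a controlled, layered manner. The construction should be arranged so that its combinatorial skeleton can be presented by a schematic picture from which every multiway intersection can be read off.

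The principal obstacle, and where the bulk of the work will go, is the global verification of the good-cover property: one has to check that \emph{every} subcollection of $\G$ has intersection that is either empty or a single topological $d$-cell, and the number of such subcollections is exponential in $|\G|$. The construction must therefore be combinatorially rigid enough for the intersection lattice to be traced by a bounded case analysis, yet topologically rich enough that the resulting nerve contains no free face of dimension less than $d$. Balancing these two tensions --- combinatorial transparency against the genuinely non-convex behaviour needed to defeat Wegner's theorem --- is the heart of the argument.
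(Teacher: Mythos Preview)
Your plan is essentially the paper's approach: the paper builds an explicit good cover $\LL$ of $19$ sets in $\R^2$ whose nerve $\L$ has no $2$-collapsible face at all (exactly your ``no free face below dimension $d$'' certificate), and then obtains the counterexample for general $d\ge 2$ by taking $(d-2)$-fold suspensions, using the additivity $\gamma_0(\K\star\K')=\gamma_0(\K)+\gamma_0(\K')$. Two execution details worth noting against your sketch: the good-cover verification is not a raw exponential case analysis but exploits that all sets except three non-convex ``boomerangs'' $Z_1,Z_2,Z_3$ are already convex, and each restricted family $\{L\cap Z_i:L\in\LL\}$ can be straightened to a convex family by a plane homeomorphism; and the complex $\L$ is not pure, so the non-freeness is certified edge-by-edge (for each edge $\sigma$ exhibit $u,v$ with $\sigma\cup\{u\},\sigma\cup\{v\}\in\L$ but $\sigma\cup\{u,v\}\notin\L$) rather than via any global pseudomanifold structure.
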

We disprove this conjecture for every $d \geq 2$. We start with constructing a
simplicial complex $\L$ which is topologically $2$-representable but not
$2$-collapsible. In higher dimensions we obtain a counterexample by using
suspensions of the complex $\L$. 

\begin{theorem}
\label{t:main}
For every $d \geq 2$ there is a simplicial complex which is topologically
$d$-representable but not $d$-collapsible.
\end{theorem}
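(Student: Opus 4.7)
I would first construct a specific $2$-dimensional simplicial complex $\L$ which is topologically $2$-representable but not $2$-collapsible, and then deduce the cases $d \geq 3$ by iterating the suspension operation on $\L$.

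\textbf{Base case, $d = 2$.} To block every $2$-collapse I would aim for the strongest possible local obstruction: design $\L$ so that every vertex and every edge is contained in at least two maximal faces. For a two-dimensional complex this amounts to asking every edge to lie in at least two triangles (so $\L$ branches along every edge) and every vertex to sit in at least two maximal faces. Under this condition no face of dimension $\leq 1$ is contained in a unique maximal face, so no elementary $2$-collapse is possible at all and $\L$ is certainly not $2$-collapsible. For topological $2$-representability I would exhibit an explicit family of overlapping open planar regions whose nerve is $\L$, and verify by inspection that every iterated intersection is either empty or a $2$-cell. By the nerve theorem $\L$ is then homotopy equivalent to an open subset of $\R^2$, i.e., to a graph, so $\L$ must be combinatorially two-dimensional but homotopically only one-dimensional; the branching along edges must be ``topologically absorbed''.

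\textbf{Suspension step, $d \geq 3$.} For the higher-dimensional counterexample I would take $\Sigma^{d-2}\L$ and argue that both properties lift under suspension. \emph{Representability:} given a good cover $\{U_i\} \subset \R^{d-1}$ realizing $\L$, the collection
$$ U_i \times (-2,2), \qquad \R^{d-1} \times (1,3), \qquad \R^{d-1} \times (-3,-1) $$
in $\R^d = \R^{d-1} \times \R$ is a good cover realizing $\Sigma\L$ (with new suspension vertices $a$ and $b$): every iterated intersection is a product of a cell with an open interval, hence a $d$-cell, while the two ``caps'' are disjoint, matching the non-adjacency of $a$ and $b$ in $\Sigma\L$. \emph{Non-collapsibility:} the maximal faces of $\Sigma\L$ are exactly the sets $T \cup \{a\}$ and $T \cup \{b\}$ for maximal $T \in \L$, and a short case analysis on whether a candidate face of $\Sigma\L$ contains $a$, $b$, or neither shows that the ``no face of dimension $\leq 1$ lies in a unique maximal face'' property of $\L$ propagates to the analogous statement in dimension $\leq 2$ for $\Sigma\L$, and then iteratively to dimension $\leq d-1$ for $\Sigma^{d-2}\L$.

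\textbf{Main obstacle.} The crux of the argument is the planar construction of the good cover realizing $\L$: producing overlapping planar regions whose combinatorial intersection pattern forces every edge of the nerve to lie in several triangles while keeping every iterated intersection connected and cellular. The two requirements pull in opposite directions, since branching pushes $\L$ away from being a surface but the nerve of a planar good cover is constrained to have the homotopy type of a graph, so the branching cells must be arranged to cancel topologically. Once $\L$ is exhibited, the non-collapsibility check is essentially a one-line observation and the suspension step is routine.
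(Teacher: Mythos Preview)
Your strategy matches the paper's exactly: build a planar complex $\L$ with no $2$-collapsible face (equivalently $\gamma_0(\L)\geq 3$), realize it by an explicit good cover in $\R^2$, and then suspend $d-2$ times. Your suspension arguments are correct and are the special cases, for $\K'=\T$ the two-point complex, of the paper's two join lemmas: the paper phrases the non-collapsibility step via the general identity $\gamma_0(\K\star\K')=\gamma_0(\K)+\gamma_0(\K')$ rather than your direct case split on $a,b$, and the representability step via $\F\star\F'=\{F\times\R^{d'}\}\cup\{\R^d\times F'\}$, of which your interval construction is an instance. One small slip: the paper's $\L$ is not purely $2$-dimensional, so the operative condition is ``every edge lies in at least two \emph{maximal} faces'' rather than ``in at least two triangles''; you state the correct version first, so this only affects the gloss.

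The genuine gap is the one you yourself name: the actual construction of $\L$ and of the good cover realizing it. The paper fills this with an explicit $19$-element family of planar regions (sixteen convex, three non-convex ``boomerangs'' $Z_i$), checks the good-cover property by observing that each $\LL^{Z_i}$ can be straightened by a plane homeomorphism, and then verifies edge by edge, via a table, that every $1$-face admits two incompatible extensions. There is no abstract shortcut here; this explicit construction is the entire content of the theorem, and until you produce one your outline remains a plan rather than a proof.
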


\heading{Additional background.}
Conjecture~\ref{c:weg} is a natural question in the context of Helly-type
theorems. For example the Helly theorem~\cite{helly23} can be formulated in
such a way that a convexly $d$-representable simplicial complex containing all
$d$-faces has to be already a full simplex. Once we know that convexly
$d$-representable simplicial complexes are $d$-collapsible, it is a simple
consideration to prove the Helly theorem\footnote{There is also a simple
geometric proof of the Helly theorem, of course.}. In this case, the Helly
theorem has a topological extension for topologically $d$-representable
simplicial complexes~\cite{helly30}. Many Helly~type theorems have a similar
topological extension; see, e.g., the introduction of~\cite{matousek-tancer09}
for more detailed list. However, the conclusion for $d$-collapsible complexes may be stronger than for topologically $d$-representable complexes, e.g.,
in~\cite{kalai-meshulam05}. Our counterexample thus shows that results for
$d$-collapsible complexes cannot be generalized all at once for topologically
$d$-representable complexes.

\section{Planar case}
We start this section with describing the complex $\L$. Let $A_1, A_2, A_3,
B_1, B_2, B_3,$ $C_1, C_2, C_3, D, X_1, X_2, X_3, Y_1, Y_2, Y_3, Z_1, Z_2, Z_3$
be the (open) sets from the Figure~\ref{f:ce}. We also set $\A := \{A_1, A_2, A_3\}$,
$\B := \{B_1, B_2, B_3\}$, $\C := \{C_1, C_2, C_3\}$, $\D := \{D\}$, $\X :=
\{X_1, X_2, X_3\}$, $\Y := \{Y_1, Y_2, Y_3\}$, and $\Z := \{ Z_1, Z_2, Z_3\}$.
Let $\LL$ be the collection of all these sets, i.e., $\LL := \A \cup \B \cup \C
\cup \D \cup \X \cup \Y \cup \Z$. Finally, $\L$ is the nerve of $\LL$.

\begin{figure}
\begin{center}
\label{f:ce}
\includegraphics{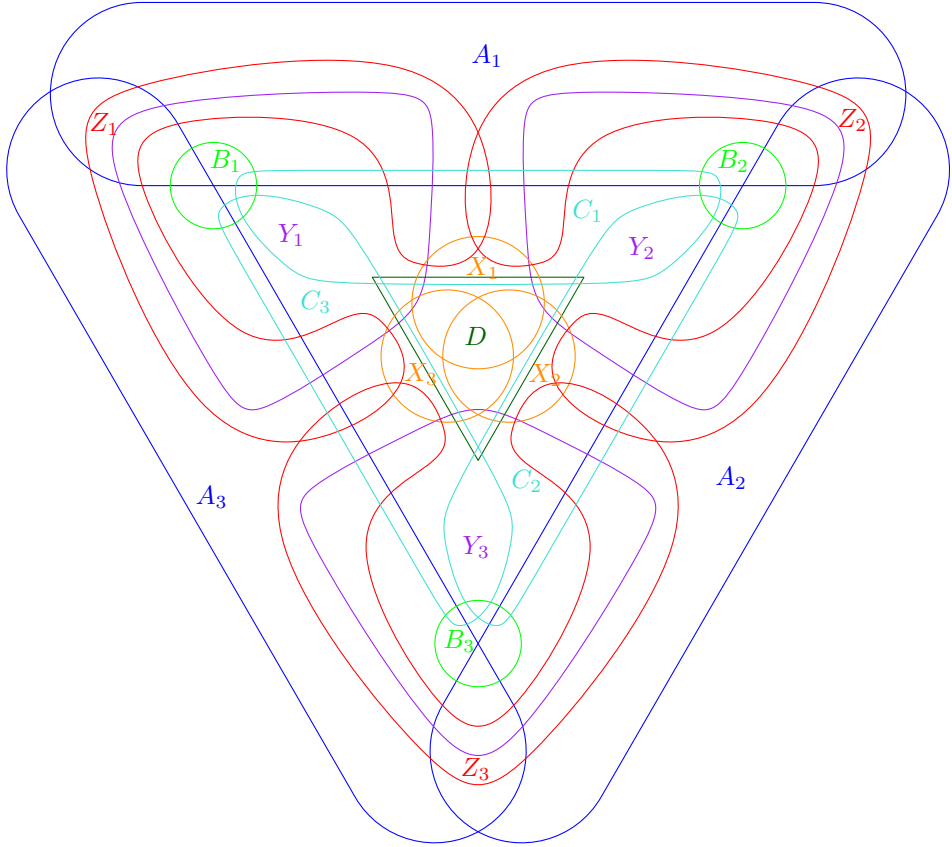}
\caption{The sets $A_1, \dots, Z_3$. We rather supply more detailed description
of the sets if the picture is print only in black and white: The sets $A_*$ are
the ovals on the boundary; $B_*$ are the small discs close to the
boundary; $C_*$ are the bread-shaped sets; $D$ is the triangle in the center;
$X_*$ are the circles close to the center; $Y_*$ are the bell-shaped sets; and $Z_*$ are the boomerang-shaped sets}
\end{center}
\end{figure}

We will show that $\L$ is topologically $2$-representable but not
$2$-collapsible.

\subsection{Topological representability}
It is sufficient to show that $\LL$ is a good cover. This property can be
hand-checked; however, we offer an alternative approach.

First we realize that all sets of $\LL \setminus \Z$ are convex. Thus $\LL
\setminus \Z$ is a good cover. It remains to check that adding sets of $\Z$
does not violate this property.

Let $Z \in \Z$ and let $\LL^{Z} := \{L \cap Z: L \in \LL\}$. We are done as
soon as we show that $\LL^{Z_1}, \LL^{Z_2}$, and $\LL^{Z_3}$ are good covers.

Because of the symmetry we show it only for $\LL^{Z_1}$. The sets of
$\LL^{Z_1}$ can be transformed into convex sets by a homeomorphism of $\R^2$.
See Figure~\ref{f:hom}. Thus they form a good cover.

\begin{figure}
\begin{center}
\label{f:hom}
\includegraphics{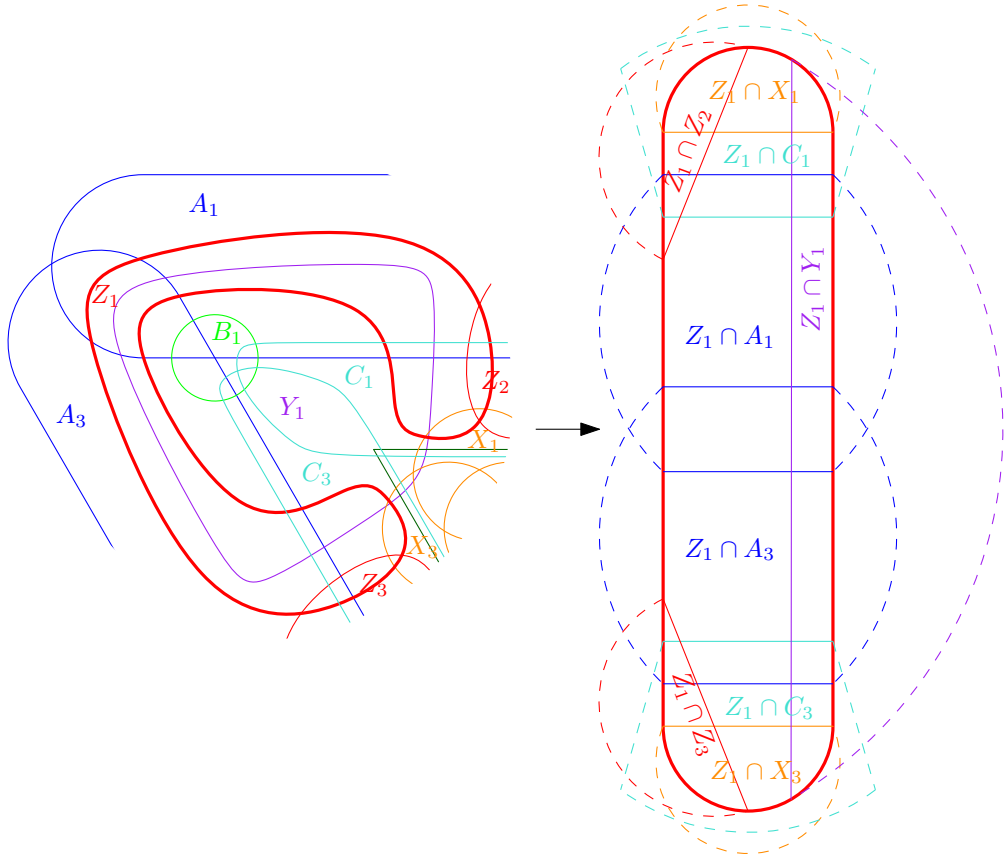}
\caption{A transformation of $\LL^{Z_1}$. Whatever is outside of $Z_1$ can be
ignored.}
\end{center}
\end{figure}

\subsection{Non-collapsibility by case analysis}
Here we prove that $\L$ is not 2-collapsible by case analysis. We get a bit
stronger results that will help us for higher dimensions. Disadvantage of
this proof is that it does not give an explanation how is the complex
constructed. Therefore we supply an additional heuristic 
explanation in the next
subsection, although it would need a bit more effort to turn that explanation
into a proof.

For a simplicial complex $\K$ we set 
$$
\gamma_0(\K) := \min\{d: \K \hbox{ has a $d$-collapsible face} \}.
$$
The fact that $\L$ is not 2-collapsible is implied by the following
proposition.

\begin{proposition}
\label{p:gamma}
$\gamma_0(\L) = 3$.
\end{proposition}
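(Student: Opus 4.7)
The plan is to establish the equality $\gamma_0(\L) = 3$ in two directions: exhibit a 2-face contained in a unique maximal face to witness $\gamma_0(\L)\le 3$, and show that no vertex or edge of $\L$ is contained in a unique maximal face to witness $\gamma_0(\L)\ge 3$.

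First I would read off the list of maximal faces of $\L$ from Figure~\ref{f:ce}, exploiting the threefold rotational symmetry of $\LL$ to cut the number of orbits one has to examine. For each pair of sets in $\LL$ it is direct inspection whether they meet, and similarly for the higher-order intersections; in particular, since $\LL\setminus\Z$ is a collection of convex sets, the nerve of $\LL\setminus\Z$ is determined by pairwise and triple intersections, and adding each $Z_i$ changes things only inside the region $Z_i$. With this catalogue of maximal faces in hand, both inequalities reduce to a finite case check.

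For the upper bound I would look for a triangle whose set-theoretic intersection is confined to a very small region of $\R^{2}$, the most promising candidates being triangles built from $D$ and neighbours of the centre, and verify that no fourth set in $\LL$ meets this intersection. Such a triangle is then free, producing a 3-collapsible face and so $\gamma_0(\L)\le 3$.

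For the lower bound I would treat each of the seven symmetry classes of vertices ($A_i$, $B_i$, $C_i$, $D$, $X_i$, $Y_i$, $Z_i$) and produce two distinct maximal faces through a representative, and then do the same for every orbit of edges under the threefold action. The principal obstacle is the edge case analysis: there are a fairly large number of edge orbits, and for each one two witnessing maximal faces must be exhibited, which in turn depends on a faithful reading of the intersection pattern in the figure. This is where the construction has to be tight, and in particular this is the role of the boomerang-shaped sets $Z_i$: they are inserted precisely so that several edges which would otherwise become free acquire a second maximal face. Once every vertex and every edge is certified to lie in at least two maximal faces, $\L$ admits no elementary $2$-collapse at all, giving $\gamma_0(\L)\ge 3$ and hence, with the upper bound, the proposition.
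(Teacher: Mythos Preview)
Your proposal is correct and follows essentially the same route as the paper: a symmetry-reduced case analysis over all edge orbits to rule out any $2$-collapsible face, together with an explicit $2$-face in a unique maximal face for the upper bound. The paper streamlines this in two small ways you might adopt: it encodes ``two distinct maximal faces through the edge $\sigma$'' by a witness pair $u,v$ with $\sigma\cup\{u\},\sigma\cup\{v\}\in\L$ but $\sigma\cup\{u,v\}\notin\L$ (so one never has to name full maximal faces), and it notes that the vertex case is subsumed by the edge case, since a vertex lying in a unique maximal face would force any edge through it to do so as well; conversely, the paper leaves the inequality $\gamma_0(\L)\le 3$ implicit, whereas you spell it out.
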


In order to prove the proposition we need a simple lemma.
\begin{lemma}
\label{l:split}
Let $\K$ be a simplicial complex and $\sigma$ be a 1-face (edge) of it. 
Assume that $u$ and $v$ are vertices of $\K$ not belonging to $\sigma$ such that
$\sigma \cup \{u\} \in \K$, $\sigma \cup \{v\} \in \K$, but $\sigma \cup
\{u,v\} \not \in \K$. Then $\sigma$ is not a $2$-collapsible face of $\K$.
\end{lemma}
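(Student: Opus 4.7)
The plan is to argue by contradiction directly from the definition of $d$-collapsibility. Suppose $\sigma$ is a $2$-collapsible face of $\K$. Since $\dim \sigma = 1 \leq 2 - 1$, the dimension requirement is already satisfied, so the only nontrivial part of the definition is that there exists a \emph{unique} maximal face $\tau \in \K$ with $\sigma \subseteq \tau$.

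Now I would exploit the two faces $\sigma \cup \{u\}$ and $\sigma \cup \{v\}$. Each of them lies in $\K$, so each is contained in some maximal face of $\K$. But any maximal face containing $\sigma \cup \{u\}$ (respectively $\sigma \cup \{v\}$) in particular contains $\sigma$, and by the uniqueness assumption this maximal face must coincide with $\tau$. Consequently $u \in \tau$ and $v \in \tau$, hence $\sigma \cup \{u, v\} \subseteq \tau$.

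Since $\K$ is a simplicial complex, every subset of a face of $\K$ is itself a face of $\K$. Thus $\sigma \cup \{u, v\} \in \K$, contradicting the hypothesis $\sigma \cup \{u, v\} \notin \K$. Therefore $\sigma$ cannot be $2$-collapsible.

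There is no real obstacle here: the argument is a one-step application of the uniqueness clause in the definition of a $d$-collapsible face together with the downward-closedness of simplicial complexes. The only thing to be careful about is to invoke the unique maximal face \emph{ambient in $\K$}, not within some subcomplex, so that the inclusion $\sigma \cup \{u,v\} \subseteq \tau$ really does force $\sigma \cup \{u, v\} \in \K$.
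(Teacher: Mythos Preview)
Your proof is correct and follows exactly the same line as the paper's: assume a unique maximal face $\tau \supseteq \sigma$, conclude $u,v \in \tau$ from the hypotheses, and derive the contradiction $\sigma \cup \{u,v\} \in \K$. The paper's version is simply terser.
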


\begin{proof}
If $\tau$ is a unique maximal face of $\K$ containing $\sigma$ then $u, v \in
\tau$ due to the conditions of the lemma. However, $\sigma \cup \{u, v\} \not
\in \K$.
\end{proof}

\begin{proof}[Proof of Proposition~\ref{p:gamma}]
In the spirit of Lemma~\ref{l:split} for every 1-face $\sigma \in \L$ we find a
couple of vertices $u, v \in \L$ such that $\sigma \cup \{u\}, \sigma \cup\{v\}
\in \L$, but $\sigma \cup \{u,v\} \not \in \L$. It is sufficient to check
1-faces since if a 0-face (vertex) $w$ is 1-collapsible then any 1-face containing $w$
is 1-collapsible as well. Moreover, it is sufficient to check only some 1-faces
because of the symmetries of the complex. The rest of the proof is given by the
following table.
\begin{center}
\begin{tabular}{cccccccc}
$\sigma$ & $u, v$ & & $\sigma$ & $u,v$ & & $\sigma$ & $u,v$\\
$\{A_1, A_2\}$ & $B_2, Z_2$ & & $\{A_1,B_1\}$ & $C_1, A_3$ & & $\{A_1,C_1\}$ & 
$B_1, B_2$\\
$\{A_1, Y_1\}$ & $B_1, Z_1$ & & $\{A_1,Z_1\}$ & $C_1, A_3$ & & $\{B_1,C_1\}$ & 
$A_1, C_3$\\
$\{B_1, Y_1\}$ & $C_1, A_3$ & & $\{C_1,C_2\}$ & $B_2, D$ & & $\{C_1,D\}$ & 
$C_2, C_3$\\
$\{C_1, X_1\}$ & $Y_1, Y_2$ & & $\{C_1,Y_1\}$ & $B_1, Z_1$ & & $\{C_1,Z_1\}$ & 
$Y_1, Z_2$\\
$\{D, X_1\}$ & $Y_1, Y_2$ & & $\{D,Y_1\}$ & $C_1, X_3$ & & $\{X_1,X_2\}$ & 
$Y_2, X_3$\\
$\{X_1, Y_1\}$ & $D, Z_1$ & & $\{X_1,Z_1\}$ & $Y_1, Z_2$ & & $\{Y_1,Z_1\}$ & 
$C_1, A_3$\\
$\{Z_1, Z_2\}$ & $A_1, X_1$ & & & & & & \\
\end{tabular}
\end{center} 
\end{proof}

\subsection{Sketch of non-collapsibility}
The purpose of this subsection is to give a rough idea why the complex $\L$
should not be 2-collapsible. This description could be useful, for instance,
for generalizations. However, the reader can easily skip this part. The author still prefer to include this discussion in order to explain how the complex is built up.

Let us split the collection $\LL$ into two parts $\LL^+ := \A \cup \B \cup \C
\cup \D$ and $\LL^- := \X \cup \Y \cup \Z$. The nerve of $\LL^+$, resp.
$\LL^-$, is denoted by $\L^+$, resp. $\L^-$. Both $\L^+$ and $\L^-$ are
triangulations of a disc with only three boundary edges $\{A_1,A_2\}$, $\{A_1,
A_3\}$, and $\{A_2,A_3\}$;
resp. $\{Z_1,Z_2\}$, $\{Z_1, Z_3\}$, and $\{Z_2,Z_3\}$; see
Figure~\ref{f:triang}. Only these boundary faces are 2-collapsible faces of
$\L^+$, resp. $\L^-$.

\begin{figure}
\label{f:triang}
\begin{center}
\includegraphics{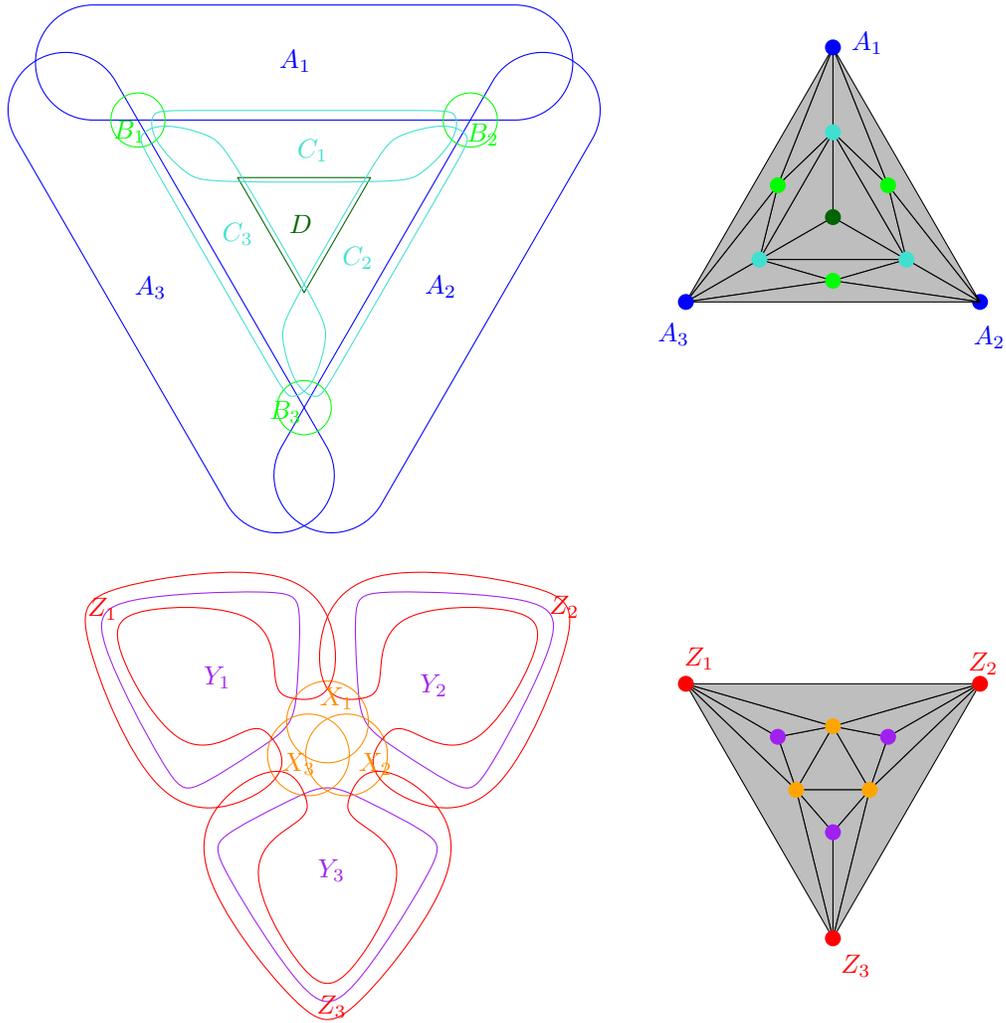}
\caption{The complexes $\L^+$ and $\L^-$.}
\end{center}
\end{figure}

By suitable overlapping of $\LL^+$ and $\LL^-$ (i.e., obtaining $\LL$) we get
that also the above mentioned boundary faces are not 2-collapsible anymore (in
whole $\L$). For instance $Z_1 \cap Z_2$ intersects $A_1$ (in addition to $X_1$
already in $\LL^-$); however, $A_1$ and $X_1$ are disjoint. Thus $\{Z_1, Z_2\}$
is not a 2-collapsible face of $\LL$.

It remains to check that merging $\LL^+$ and $\LL^-$ does not introduce any new
problems. It is, in fact, checked in a detail in the previous section. We just
mention that there is no problem with 1-faces which already appear in $\L^+$ or
$\L^-$. However; new 1-faces are introduced when one vertex comes from $\L^+$
and the second one from $\L^-$. For another triangulations these newly
introduced faces can be $2$-collapsible.\footnote{It would be perhaps possible
to show that the complex is not 2-collapsible even if the newly introduced
faces were 2-collapsible. Listing all 1-faces in the previous subsection seems,
however, more convenient for the current purpose.}

\section{Higher dimensions}
Joins of simplicial complexes will help us to generalize the counterexample to
higher dimensions. Let $\K$ and $\K'$ be simplicial complexes with the vertex
sets $V(\K)$ and $V(\K')$. Their \emph{join} is a simplicial complex $\K \star
\K'$ whose vertex set is the disjoint union $V(\K) \sqcup V(\K')$;\footnote{If
$A$ and $B$ are sets with $A \cap B \neq \emptyset$ then their disjoint union
can be defined as $A \sqcup B := A\times\{1\} \cup B \times \{2\}$.} and whose
set of faces is $\{\alpha \sqcup \beta: \alpha \in \K,
\beta \in \K'\}$.

We need the following two lemmas.

\begin{lemma}[{\cite[Lemma~4.2]{matousek-tancer09}}]
\label{l:gamma}
For every two simplicial complexes $\K$, $\K'$ we have $\gamma_0 (\K \star
\K') = \gamma_0 (\K) + \gamma_0(\K')$.
\end{lemma}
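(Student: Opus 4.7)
The plan is to reduce the lemma to an elementary unpacking of definitions. First I would observe that a face $\sigma \in \K$ is $d$-collapsible exactly when it is contained in a unique maximal face of $\K$ and $\dim \sigma \leq d-1$; consequently
$$
\gamma_0(\K) \;=\; \min\{\dim\sigma + 1 : \sigma \in \K \text{ is contained in a unique maximal face of } \K\},
$$
with the convention $\dim \emptyset = -1$. This rewriting lets me compute $\gamma_0$ by minimizing over a purely combinatorial condition.

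Next I would analyze faces of $\K \star \K'$. Since the vertex set is the disjoint union $V(\K) \sqcup V(\K')$, every face $\sigma \in \K \star \K'$ has a unique decomposition $\sigma = \alpha \sqcup \beta$ with $\alpha \in \K$ and $\beta \in \K'$, and the dimensions satisfy $\dim \sigma + 1 = (\dim \alpha + 1) + (\dim \beta + 1)$. Moreover $\sigma$ is a maximal face of $\K \star \K'$ if and only if $\alpha$ is maximal in $\K$ and $\beta$ is maximal in $\K'$, so maximal faces of the join containing $\alpha \sqcup \beta$ are in bijection with pairs consisting of a maximal face of $\K$ containing $\alpha$ and a maximal face of $\K'$ containing $\beta$. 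Therefore $\alpha \sqcup \beta$ lies in a unique maximal face of $\K \star \K'$ iff $\alpha$ lies in a unique maximal face of $\K$ and $\beta$ lies in a unique maximal face of $\K'$.

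Combining these two facts, the minimum of $\dim \sigma + 1$ over faces of $\K \star \K'$ lying in a unique maximal face splits as the sum of two independent minima, one over $\alpha \in \K$ and one over $\beta \in \K'$. The $\leq$ direction is obtained by joining faces achieving $\gamma_0(\K)$ and $\gamma_0(\K')$ respectively, and the $\geq$ direction by decomposing an optimal face of the join via $\sigma = \alpha \sqcup \beta$.

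I do not expect a serious obstacle; the argument is essentially definitional. The only point of minor care is to respect the convention $\dim \emptyset = -1$, so that complexes that are simplices (hence have a unique maximal face, namely the empty face is already contained in it) correctly receive $\gamma_0 = 0$; this is what makes the additive formula consistent in the degenerate case when one of $\K, \K'$ happens to be a simplex.
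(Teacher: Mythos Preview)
Your argument is correct. The key observation---that maximal faces of $\K\star\K'$ containing $\alpha\sqcup\beta$ are in bijection with pairs of maximal faces of $\K$ and $\K'$ containing $\alpha$ and $\beta$ respectively---immediately yields both inequalities, and your handling of the $\dim\emptyset=-1$ convention is exactly what is needed to make the degenerate (simplex) case consistent.

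Note, however, that the present paper does not actually prove this lemma: it is quoted verbatim from \cite[Lemma~4.2]{matousek-tancer09} and used as a black box. So there is no ``paper's own proof'' to compare against here. That said, your unpacking-of-definitions argument is the natural one, and it is essentially what the cited reference does as well; there is no meaningfully different route for a statement this close to the definitions.
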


\begin{lemma}
\label{l:joinrep}
Let $\K$ be a convexly/topologically $d$-representable complex and $\K'$ be a
convexly/topologically $d'$-representable complex. Then $\K \star \K'$ is a
convexly/topologically $(d + d')$-representable complex.
\end{lemma}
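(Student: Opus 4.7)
The plan is to realize the join by a product-type construction in $\R^{d+d'} = \R^d \times \R^{d'}$, and then check that the nerve of the resulting collection is exactly $\K \star \K'$.

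First I would fix representations: write $\K$ as the nerve of a family $\F = \{F_1, \dots, F_n\}$ in $\R^d$ and $\K'$ as the nerve of a family $\F' = \{F'_1, \dots, F'_m\}$ in $\R^{d'}$, where in the convex case the sets are convex and in the topological case they form good covers. In $\R^{d+d'}$ I would then consider the collection
\[
\widetilde{\F} := \{F_i \times \R^{d'} : 1 \leq i \leq n\} \cup \{\R^d \times F'_j : 1 \leq j \leq m\}.
\]
The disjoint union structure of the vertex set of $\K \star \K'$ is matched by the two syntactically distinct types of sets (even if some $F_i$ happens to coincide with some $F'_j$, the two lifts live in different ``factors'').

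Next I would verify that $\widetilde{\F}$ has the desired representability. In the convex case each $F_i \times \R^{d'}$ and each $\R^d \times F'_j$ is clearly convex. In the topological case each such set is an open subset of $\R^{d+d'}$ homeomorphic to an open $(d+d')$-ball: for instance $F_i \times \R^{d'} \cong \R^d \times \R^{d'} = \R^{d+d'}$ since $F_i \cong \R^d$, and similarly on the other side.

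Then I would compute an arbitrary intersection. Let $I \subseteq \{1,\dots,n\}$ and $J \subseteq \{1,\dots,m\}$. Using $(F_i \times \R^{d'}) \cap (\R^d \times F'_j) = F_i \times F'_j$, one gets
\[
\Bigl(\bigcap_{i \in I} F_i \times \R^{d'}\Bigr) \cap \Bigl(\bigcap_{j \in J} \R^d \times F'_j\Bigr) = \Bigl(\bigcap_{i \in I} F_i\Bigr) \times \Bigl(\bigcap_{j \in J} F'_j\Bigr),
\]
with the convention that an empty intersection of factors gives the whole $\R^d$ or $\R^{d'}$. This product is nonempty iff both factors are nonempty, i.e., iff $I \in \K$ and $J \in \K'$, which is exactly the condition for $I \sqcup J$ to be a face of $\K \star \K'$. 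Hence the nerve of $\widetilde{\F}$ equals $\K \star \K'$.

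Finally, in the good-cover case I need to check that each nonempty intersection is a $(d+d')$-cell. This is the only step with any content: it follows because the product of a $d$-cell and a $d'$-cell is a $(d+d')$-cell (each factor is homeomorphic to $\R^d$, resp.\ $\R^{d'}$, so the product is homeomorphic to $\R^{d+d'}$), and the boundary cases where $I$ or $J$ is empty are handled by the same observation with one factor equal to $\R^d$ or $\R^{d'}$. The main potential obstacle is making sure the disjoint-union convention is respected when $\F$ and $\F'$ share set names, but this is automatic because the two families sit in different coordinate slabs of $\R^{d+d'}$.
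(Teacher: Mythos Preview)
Your argument is correct and is exactly the construction the paper uses: lift $\F$ and $\F'$ to $\R^{d+d'}$ via $F\times\R^{d'}$ and $\R^d\times F'$, then observe that intersections factor as products so the nerve is $\K\star\K'$ and the convex/good-cover property is preserved. The paper merely asserts that these verifications are ``easy to check,'' whereas you have spelled them out.
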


\begin{proof}
Let $\F$ be a collection of convex sets/good cover in $\R^d$
such that $\K$ is isomorphic to the nerve of $\F$. Similarly $\F'$ is a
suitable collection in $\R^{d'}$ such that $\K'$ is isomorphic to the nerve of
$\F'$.

Let us set
$$
\F \star \F' := \{F \times \R^{d'}: F \in \F\} \cup \{\R^d \times F': F' \in
\F'\}.
$$
Then it is easy to check that $\K \star \K'$ is isomorphic to the nerve of $\F
\star \F'$. Moreover $\F \star \F'$ is a collection of convex sets/good cover
in $\R^{d + d'}$.
\end{proof}

Now we can finish the proof of our main result.
\begin{proof}[Proof of Theorem~\ref{t:main}.]
Let $\T$ be the simplicial complex consisting of two isolated points. The
complex $\T$ is topologically 1-representable and $\gamma_0(\T) = 1$.
Let us set
$$
\J = \L \star \underbrace{\T \star \cdots \star \T}_{d-2}.
$$
In topology, the complex $\J$ would be called $(d-2)$-tuple \emph{suspension}
of $\L$.
Then $\gamma_0 (\J) = d+1$ due to Proposition~\ref{p:gamma} and
Lemma~\ref{l:gamma}. On the other hand, $\J$ is topologically $d$-representable
due to Lemma~\ref{l:joinrep}.
\end{proof}

\section{Conclusion}

In the spirit of Helly-type theorems we could ask whether there is at least
some weaker bound for collapsibility of topologically $d$-representable
complexes.

\begin{question}
For which $d \geq 2$ there is a $d' \in \N$ (as least as possible) such that every topologically $d$-representable complex is $d$-collapsible?
\end{question}

Using joins of multiple copies of $\L$ (instead of suspensions of $\L$) we
obtain the following bound.

\begin{proposition}
For every $d \geq 2$ there is a simplicial complex which is topologically
$2d$-representable but not $(3d-1)$-collapsible.
\end{proposition}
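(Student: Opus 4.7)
The plan is to take $\J_d := \underbrace{\L \star \L \star \cdots \star \L}_{d}$, the join of $d$ copies of the planar counterexample $\L$, and show that it is topologically $2d$-representable while $\gamma_0(\J_d) = 3d$. These two facts together immediately yield the proposition: the second one implies that $\J_d$ has no $(3d-1)$-collapsible face, so no elementary $(3d-1)$-collapse can even be started on the non-empty complex $\J_d$.

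First I would verify topological representability. Since $\L$ is topologically $2$-representable by the construction of Section~2 (the collection $\LL$ was shown to form a good cover in $\R^2$), an induction on $d$ using Lemma~\ref{l:joinrep} gives that $\J_d$ is topologically $(2 + 2 + \cdots + 2) = 2d$-representable. In concrete terms, if $\LL \subset \R^2$ realizes $\L$, then one can iteratively form the collection
\[
\LL_d := \bigcup_{i=1}^{d} \{\R^{2(i-1)} \times L \times \R^{2(d-i)} : L \in \LL\},
\]
which is a good cover in $\R^{2d}$ whose nerve is $\J_d$.

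Next I would compute $\gamma_0(\J_d)$. By Proposition~\ref{p:gamma} we have $\gamma_0(\L) = 3$; a straightforward induction using Lemma~\ref{l:gamma} gives $\gamma_0(\J_d) = d \cdot \gamma_0(\L) = 3d$. By definition of $\gamma_0$, this means $\J_d$ contains no face of dimension at most $3d-2$ that is contained in a unique maximal face, i.e.\ $\J_d$ admits no $(3d-1)$-collapsible face at all, so it is certainly not $(3d-1)$-collapsible.

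There is essentially no obstacle here: the argument is a direct parallel of the proof of Theorem~\ref{t:main}, with the trivial suspension factors $\T$ (each contributing $1$ to both the representability dimension and $\gamma_0$) replaced by copies of $\L$ (each contributing $2$ and $3$ respectively), so that the ratio of non-collapsibility to representability dimension improves from $(d+1)/d$ to $3d/(2d) = 3/2$. The only point deserving attention is to record that $\L$ is itself topologically $2$-representable, which has already been established in Section~2.1 and therefore serves as the base case for the iterated application of Lemma~\ref{l:joinrep}.
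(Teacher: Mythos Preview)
Your proposal is correct and follows exactly the paper's approach: the paper's proof consists of the single line ``Consider the complex $\underbrace{\L \star \cdots \star \L}_{d}$,'' and you have simply spelled out the implicit applications of Proposition~\ref{p:gamma}, Lemma~\ref{l:gamma}, and Lemma~\ref{l:joinrep} that the reader is expected to supply.
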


\begin{proof}
Consider the complex $\underbrace{\L \star \cdots \star \L}_{d}$.
\end{proof}

If there is a wider gap among these notions it will also reflect at the gap
between $d$-representable and \emph{$d$-Leray} complexes obtained (with a
similar method) by Matou\v{s}ek and the author~\cite{matousek-tancer09}.

\section*{Acknowledgement}
I would like to thank Xavier Goaoc and Ji\v{r}\'{\i} Matou\v{s}ek for fruitful
discussions on this topic.

\bibliographystyle{alpha}
\bibliography{/home/martin/clanky/bib/grph,/home/martin/clanky/bib/topocom,/home/martin/clanky/bib/combgeo,/home/martin/clanky/bib/compl}

\end{document}